\begin{document}

\newtheorem{definition}{Definition}
\newtheorem{lemma}{Lemma}
\newtheorem{remark}{Remark}
\newtheorem{theorem}{Theorem}
\newtheorem{proposition}{Proposition}
\newtheorem{assumption}{Assumption}
\newtheorem{example}{Example}
\newtheorem{corollary}{Corollary}
\newtheorem{conclusion}{Conclusion}
\def\ep{\varepsilon}
\def\Rn{\mathbb{R}^{n}}
\def\Rm{\mathbb{R}^{m}}
\def\E{\mathbb{E}}
\def\hte{\hat\theta}

\renewcommand{\theequation}{\thesection.\arabic{equation}}

\title{ Data assimilation and parameter estimation for a multiscale stochastic system  with $\alpha$-stable L\'evy noise}
\author{Yanjie Zhang$^1$,
Zhuan Cheng$^2$,
Xinyong Zhang$^3$,
Xiaoli Chen$^1$,
Jinqiao Duan$^2$ and Xiaofan Li$^2$
}

\address{$^1$Center for Mathematical Sciences \& School of Mathematics and Statistics, Huazhong University of Science and Technology, Wuhan 430074,  China}
\address{$^2$Department of Applied Mathematics, Illinois Institute of Technology, Chicago, IL 60616, USA}
\address{$^3$Department of Mathematical Sciences, Tsinghua University, Beijing 100084,  China}

\ead{zhangyanjie2011@163.com, zcheng9@hawk.iit.edu, zhangxinyong12@mails.tsinghua.edu.cn,
xlmath804@163.com, duan@iit.edu and lix@iit.edu  .}

\address{$^*$This work was partly supported by the NSF grant 1620449,  and NSFC grants 11531006, 11371367,  and 11271290.}

\begin{abstract}
This work is about low dimensional reduction for a slow-fast data assimilation system with non-Gaussian $\alpha-$stable L\'evy noise via stochastic averaging. When the observations are only available for slow components, we show that the averaged, low dimensional filter approximates the original filter, by examining the corresponding Zakai stochastic partial differential equations. Furthermore, we demonstrate that the low dimensional slow system approximates the slow dynamics of the original system, by examining   parameter estimation and most probable paths.

\noindent{\bf Keywords:} Multiscale systems, non-Gaussian L\'evy noise,  averaging principle,  Zakai equation,  parameter estimation, most probable paths
\end{abstract}

\section{Introduction}
Data assimilation is a procedure to extract system state information with the help of observations \cite{Crisan}. The state evolution and the observations are usually under random fluctuations. The general idea  is to gain the best estimate for the true system state, in terms of the probability distribution for the system state, given only some noisy observations of the system. It provides a recursive algorithm for estimating a signal or state of a random dynamical system based on noisy measurements. It is also very important in many practical applications from inertial guidance of aircrafts and
spacecrafts to weather and climate prediction. Most of the existing works on data assimilation is conducted in the context of Gaussian random fluctuations.
The effects of multiscale signal and observation processes in the context of Gaussian random fluctuations has been considered by Park et.al.(see \cite{Park}), and they have shown that the probability density of the original system converges to that of the reduced system, by a Fourier analysis method. Imkeller et.al.\cite{Imkeller} have further proved the convergence in distribution for the optimal filter, via backward stochastic differential equations and asymptotic techniques.

However, random fluctuations are often non-Gaussian (in particular, L\'evy type) in nonlinear systems, for example,  in geosciences \cite{Dit}, and biosciences \cite{ Lin, Jia, Roberts, Bli, Al, Cg, Zheng}. There are experimental demonstrations of L\'evy fluctuations  in optimal foraging theory, rapid geographical spread of emergent infectious disease and switching currents. Humphries {\it et. al.} \cite{Humphries} used GPS to track the wandering black bowed albatrosses around an island in Southern Indian Ocean to study the movement patterns of searching food. They found that by fitting the data of the movement steps, the movement patterns obeys the power-law property with power parameter $\alpha=1.25$.   La Cognata {\it et. al.} \cite{Al} considered a Lotka-Volterra system of two competing species subject to multiplicative $\alpha$-stable L\'evy noise, and   analyzed the role of the L\'evy noise sources.   Lisowski {\it et. al.} \cite{Bli} studied  a model of a stepping molecular motor consisting of two connected heads, and examined its dynamics  as noise parameters varied. Speed and direction appear to very sensitively depend on characteristics of the noise.
They explored the effect of noise on the ballistic graphene-based small Josephson junctions in the framework of the resistively and capacitively shunted model and found that the analysis of the switching current distribution made it possible to efficiently detect a non-Gaussian noise
component in a Gaussian background.

L\'evy motions are  appropriate models for a class of important non-Gaussian processes with jumps or bursts \cite{Lin, Jia, jd}. It is desirable to  consider  data assimilation when the system evolution is under  L\'evy motions. This has been recently considered by one of us and other authors but not in the multiscale context (see \cite{cckc2, spss, q2, sun}).

The multi-scale stochastic dynamical systems arise widely in finance and biology. For example, there are two kinds of mutual securities in financial markets.
One for the low-risk bonds, which can be characterized by ordinary differential equations; the other for high-risk stocks, whose price has two different changes. On the one hand, the edge change caused by the normal supply and demand can be characterized by Gaussian noise.
On the other hand, due to the arrival of the important information of the stock, there will be a finite jump in the stock price. Such perturbations can be characterized by non-Gauss noise. In general, stock prices change at all times, while bond prices change for months or even years. Thus, the price of these two securities can be characterized by two-scales system with non-Gaussian
noise (see \cite{gid}). Moreover, a large number of observations from biological  experiments showed that the production of mRNA and proteins occur in a bursty, unpredictable, and intermittent manner, which create variation or "noise" in individual cells or cell-to-cell interactions. Such burst-like events appear to be appropriately modeled by the non-Gaussian noise. Since the mRNA synthesis process is faster than the protein dynamics, this leads to a two-time-scale system (see \cite{wf}). Here $\epsilon $ represents the ratio between the natural time scales of the protein and mRNA.

Parameter estimation for continuous time stochastic models is an increasingly
important part of the overall modeling strategy in a wide variety of applications.
It is quite often the case that the data to be fitted to a diffusion process has a
multiscale character with Gaussian noise. One example is in molecular dynamics, where
it is desirable to find effective models for low dimensional phenomena (such as
conformational dynamics, vacancy diffusion and so forth) which are embedded
within higher dimensional time-series. We are often interested in the parameter (see \cite{wf}),  which represents the degradation or production rates of protein and mRNA. In this paper, we develop a parameter estimation method for multiscale diffusions with non-Gaussian noise. The results established here may be used to examine the change rate for low-risk bounds (see \cite{gid}).

In this present paper, we consider a \emph{slow-fast} data assimilation system   under  L\'evy noise, but only the slow component $ X_t $ is observable. By averaging out the fast component via an invariant measure, we thus reduce the system dimension by focusing on the slow system evolution. We prove that the reduced lower dimensional filter effectively approximates (in probability distribution) the original filter. We  demonstrate that a system parameter may be estimated via the low dimensional slow system, utilising only observations on the slow component. The accuracy for this estimation is quantified by $p$-moment, with $p\in(1,\alpha)$. We apply the stochastic Nelder-Mead method \cite{ck} for optimization in the searching for the estimated parameter value. Furthermore, we  illustrate the low dimensional approximation by  comparing  the most probable paths for the slow system and the original system. Finally, we make some remarks in Section 6.

This paper is organized as follows.  After recalling some basic facts about L\'evy motions and the generalized solution in the next section,  we address the effects of the multiscale signal and observation processes in the context of L\'evy random fluctuations in Section 3 to Section 5.  We illustrate  the low dimensional slow approximation by examining zakai equation, parameter estimation    and most probable paths. Finally, we give some discussions and comments in a more biological context.

\section{Preliminaries}
We recall some basic definitions for L\'evy motions (or L\'evy processes).
\begin{definition}
A stochastic process $L_t$ is a  L\'evy  process if
\begin{enumerate}
\item[(1)]$L_0=0$  (a.s.);
\item[(2)]$L_t$ has independent increments and stationary increments; and
\item[(3)] $L_t$ has stochastically continuous sample paths, i.e.,  for  every  $s\geq 0$,
 $L(t)\rightarrow L(s)$ in probability, as $t \rightarrow s $.
\end{enumerate}
\end{definition}
A L\'evy process $L_t$ taking values in $\mathbb{R}^n$ is characterized by a drift vector $b \in {\mathbb{R}^n}$, an $n \times n$ non-negative-definite, symmetric covariance matrix $ Q $ and a Borel measure $\nu$ defined on ${\mathbb{R}^n}\backslash \{ 0\} $.   We call $(b,Q,\nu)$ the generating triplet of  the L\'evy motions $L_t$. Moreover, we have the L\'evy-It\^o decomposition for $L_t$ as follows:
\begin{equation}
{L_t}= bt + B_{Q}(t) + \int_{||y||< 1} y \widetilde N(t,dy) + \int_{||y||\ge 1} y N(t,dy),
\end{equation}
where $N(dt,dy)$ is the Poisson random measure, $\widetilde N(dt,dy) = N(dt,dy) - \nu (dx)dt$ is the compensated Poisson random measure, $\nu (S) = \mathbb{E}N(1,S)$ is the jump measure, and $B_t$ is an independent standard $n$-dimensional Brownian motion. The characteristic function of $L_t$ is given by
\begin{equation}
\mathbb{E}[\exp({\rm i}\langle u, L_t \rangle)]=\exp(t\psi(u)), ~~~u \in {\mathbb{R}^n},
\end{equation}
where the function $\psi:{\mathbb{R}^n}\rightarrow \mathbb{C}$ is  the characteristic exponent
\begin{equation}
\psi(u)={\rm i}\langle u, b\rangle-\frac{1}{2}\langle u, Qu \rangle+\int_{{\mathbb{R}^n}\backslash \{ 0\}}{(e^{{\rm i}\langle u, z \rangle}-1-{\rm i}\langle u,z\rangle {I_{\{ || z || \textless 1\} }})\nu(dz)}.
\end{equation}
The Borel measure $\nu $ is called the jump measure. Here $\langle \cdot, \cdot \rangle $ denotes the scalar product in $ \mathbb{R}^n $.
\begin{definition}
For $\alpha \in (0,2)$, an  $ n $-dimensional symmetric $\alpha $-stable process $ L^{\alpha}_{t} $ is a L\'evy process with characteristic exponent $\psi$
\begin{equation}
\psi(u)=-C_1(n,\alpha)| u |^{\alpha},  ~for~u \in {\mathbb{R}^{n}}
\end{equation}
with $ C_1(n, \alpha):=\pi^{-\frac{1}{2}}\Gamma((1+\alpha)/2)\Gamma(n/2)/\Gamma((n+\alpha)/2)$.
\end{definition}

For an $n$-dimensional symmetric $\alpha$-stable L\'evy process, the diffusion matrix $ Q= 0$,
the drift vector $ b= 0$, and the L\'evy measure $\nu $ is given by
\begin{equation}
\nu(du)=\frac{C_2(n,\alpha)}{{|| u ||}^{n+\alpha}}du,
\end{equation}
where $ C_2(n, \alpha):=\alpha\Gamma((n+\alpha)/2)/{(2^{1-\alpha}\pi^{n/2}\Gamma(1-\alpha/2))}$.

For every function $\phi\in \mathcal{C}^2_{0}({\mathbb{R}^n})$, the generator for this   symmetric $\alpha $-stable L\'evy process in $\mathbb{R}^n$ is
\begin{equation}
(\mathcal{A}_{\alpha}\phi)(x)= \int_{{\mathbb{R}^n}\backslash \{ 0\}}[\phi(x+u)-\phi(x)] \; \nu(du).
\end{equation}
It is known \cite{da} that $\mathcal{A}_{\alpha}$ extends uniquely to a self-adjoint operator in the domain. By Fourier inverse transform,
\begin{equation}
\mathcal{A}_{\alpha}\phi=\theta_{\alpha, n}(-\Delta)^{\alpha/2}\phi(x),
\end{equation}
where
\begin{equation}
\theta_{\alpha, n}=\int_{{\mathbb{R}^n}\backslash \{ 0\}}(\cos(\langle
e, y\rangle)-1) \; \nu(du)
\end{equation}
with $ e $ being the unit vector in $ \mathbb{R}^n $.

For fix $\alpha \in (1,2), \eta \in [0, \alpha)$, and set
\begin{equation}
\rho(x)=\big(1+| x |\big)^{\eta}.
\end{equation}
Let $L^{p}_{\rho}(\mathbb{R}^{n})$ be the weighted $ L^{p}$-space with norm:
\begin{equation}
|| f ||_{p;\rho}=\big(\int_{\mathbb{R}^{n}} | f(x)|^{p}\rho(x)dx\big)^{\frac{1}{p}}
\end{equation}
For $l\in {\mathbb{N}}$, let $W_{\rho}^{l,p}$ be the $l$-order weighted Sobolev space with norm
\begin{equation}
|| f ||_{l,p;\rho}=\sum_{i=0}^l || \nabla ^{l} f ||_{p;\rho},
\end{equation}
where $\nabla^{l}$ denotes the $l$-order gradient.
\begin{definition}
A backward predictable stochastic process $v \in L^{p}(\Omega; L_{loc}(\mathbb{R};W^{1,p}_{\rho}))$ is called a generalized solution of the equality
\begin{equation}
v(t,x)=\varphi(x)+\int_{[t,T]}Lv(s,x)ds +\int_{[t,T]}h^{l}v(s,x)dw^{l}(s),
\end{equation}
if for every $y \in C_0^{\infty}(\mathbb{R}^{d})$ it satisfies the following equation
\begin{equation}
(v(t,x),y)=(\varphi,y)+\int_{[t,T]}(v(s,x),L^{*}y)ds
+\int_{[t,T]}(h^{l}v(s,x),y)dw^{l}(s)
\end{equation}
\end{definition}
with $L$ being the generator of some L\'evy process.
\section{A slow-fast filtering  system}

Let us consider the following slow-fast signal-observation system:
\begin{numcases}{}
  d{X_t} = {f_1}({X_t},{Y_t})dt + {g_1}({X_t},{Y_t})d{B^1_t} +\sigma_1 dL_1^{{\alpha _1}},  \;\;  x\in \mathbb{R}^n, \\
  d{Y_t} = {\varepsilon ^{ - 1}}{f_2}({X_t},{Y_t})dt + {\varepsilon ^{ - \frac{1}{2}}}{g_2}({X_t},{Y_t})dB_t^2 + \frac{1}{\varepsilon ^{\frac{1}{\alpha _2}}}dL_2^{{\alpha _2}},  y \in \mathbb{R}^m, \\
  d{Z_t} = h({X_t})dt + d{W_t}, \; z\in \mathbb{R}^d.
  \end{numcases}
Here $({X_t},{Y_t})$ is an ${\mathbb{R}^n} \times {\mathbb{R}^m}$ -valued signal process which represents the slow and fast components. The constant $\sigma_1$ represents the noise intensity for the slow variable. The observation process ${Z_t}$ is ${\mathbb{R}^d}$ -valued.   The standard Brownian motions $B^1_t,B^2_t, W_t$ are   independent.  The non-Gaussian   processes  $L_1^{{\alpha _1}},L_2^{{\alpha _2}}$  (with $1<\alpha_1, \alpha_2 <2$) are independent symmetric $\alpha$-stable L\'evy processes with triplets $(0,0,{\nu_1})$ and $(0,0,{\nu _2})$, respectively.   The parameter $\varepsilon$ is the ratio of the slow time scale to the fast time scale.
We make the following assumptions  on this filtering system.\\
{\bf Hypothesis  H.1. }
The functions $f_1, g_1, f_2, g_2$ satisfy the global Lipschitz conditions, i.e., there exists a positive constant $\gamma$ such that
$$\begin{array}{l}
|| f_1(x_1,y_1)-f_1(x_2,y_2)||^{2}+|| g_1(x_1,y_1)-g_1(x_2,y_2)||^{2}\\+|| f_2(x_1,y_1)-f_2(x_2,y_2)||^{2}+|| g_2(x_1,y_1)-g_2(x_2,y_2)||^{2}\\\leq \gamma [ || x_1-x_2||^2+|| y_1-y_2||^2 ]\end{array}$$

for all $x_i\in \mathbb{R}^{n},y_i\in \mathbb{R}^{m}, \; i=1, 2.$
\begin{remark}
Note that with the help of the global Lipschitz condition, it   follows that there is a positive constant $ K $ such that
\begin{equation*}
|| f_1(x,y)||^2+|| g_1(x,y)||^{2}+|| f_2(x,y)||^2+|| g_2(x,y)||^{2}\leq K(1+|| x||^2 +|| y||^2 )
\end{equation*}
for all $x\in \mathbb{R}^{n},y\in \mathbb{R}^{m}$.
\end{remark}
\par
{\bf Hypothesis  H.2. }
The coefficients $ f_2, g_2 $ are  of class $ C^{2}(\mathbb{R}^{n}\times \mathbb{R}^{m})$  with the first and second order derivatives bounded

\par
{\bf Hypothesis  H.3. }
 The sensor function $ h $ is  of class $ C_{b}(\mathbb{R}^{n})$, i.e. all the bounded continuous functions on $\mathbb{R}^{n}$  .

\par
{\bf Hypothesis  H.4. }
There exists a positive constant $C$, such that
\begin{equation}
\sup\limits_{x}\{\mbox {Tr}[g_2(x,y)g_2(x,y)^{T}]+2\langle y,f_2(x,y)\rangle \}\leq -C(1+| y |^2).
\end{equation}

This hypothesis (H.4.) ensures the existence of an invariant measure  $\mu_{x}(dy)$ (see \cite{wu}) for the fast component $Y_t$. With the special scaling exponent  $ {\frac{1}{\alpha _2}}$ for the fast component $Y_t$, this invariant measure is independent of $\varepsilon$ (see \cite{ar,wu}).

The infinitesimal generator of the slow-fast stochastic  system  is
\begin{equation}
 \mathcal{L}= {\mathcal{L}_1} + \frac{1}{\varepsilon }{\mathcal{L}_2},
\end{equation}
where
\begin{equation}
\left\{
\begin{array}{rl}
{\mathcal{L}_1}\Phi(x,y,t)=& {f_1}\cdot {\nabla_x} \Phi(x,y,t)+\frac{1}{2}\mbox{Tr}[{G_1}\cdot {H_x}(\Phi(x,y,t) )] \\
&+\mbox{P.V.}\int_{{\mathbb{R}^n}\backslash \{ 0\}} {(\Phi (x + \sigma_1z_1,y,t) - \Phi (x,y,t))}{\nu _1}(dz_1), \\
 {\mathcal{L}_2}\Phi (x,y,t) = &{f_2}\cdot {\nabla_y }\Phi(x,y,t)+ \frac{1}{2}\mbox{Tr}[ {G_2}\cdot {H_y}(\Phi(x,y,t) )] \\
&+\mbox{P.V.}\int_{{\mathbb{R}^m}\backslash \{ 0\}} {(\Phi (x,y + z_2,t)-\Phi (x,y,t))}{\nu _2}(dz_2).
\end{array}
\right.
\end{equation}
Here $G_i=g_ig_i^T\,(i=1,2)$,  $\nabla$ is the gradient,
and $H $ is the Hessian matrix (with respect to $x$ and $y$ respectively).
Let
\begin{equation}
\mathcal{Z}_t=\sigma(Z_s:0\leq s\leq t)\vee \mathcal{N},
\end{equation}
where $\mathcal{N}$ is the  collection of all $\mathbb{P}$ -negligible sets of $(\Omega,\mathcal{F})$.
Define
\begin{equation}
\mathcal{Z}=\bigvee_{t\in {\mathbb{R}}^{+}}\mathcal{Z}_t,
\end{equation}
where $\bigvee $  denotes taking the $\sigma $-algebra generated by the union $\bigcup_t\mathcal{Z}_t $. That is,
\begin{equation}
\mathcal{Z}=\sigma(\bigcup_t\mathcal{Z}_t ).
\end{equation}

By the version of Girsanov's change of measure theorem, we obtain a new probability measure $\tilde \mathbb{P}$, such that the
observation $Z_t$ becomes $\tilde \mathbb{P}$-independent of the signal variables $(X_t,Y_t)$. This can be done through
\begin{equation}
\frac{{d\tilde \mathbb{P}}}{{d\mathbb{P}}}= \exp\left(-\sum\limits_{i = 1}^m {\int_0^t {{h^i}({X_s})dW_s^i} }- \frac{1}{2}\sum\limits_{i = 1}^m {\int_0^t {{h^i}{{({X_s})}^2}} } ds\right).
\end{equation}
Denote
\begin{equation}
\tilde{R}_t=\left.\frac{{d\mathbb{P}}}{{d\tilde\mathbb{P}}}\right|_{\mathcal{Z}_t}.
\end{equation}
Then by the Kallianpur-Striebel formula,
for every bounded differentiable function $\phi$, we have the following representation:
\begin{equation}
 \mathbb{E}[\phi(X_t,Y_t)|\mathcal{Z}_t]=\frac{\tilde\mathbb{{E}}[\tilde {{R_t}}\phi({X_t},{Y_t})|\mathcal{Z}_t]}{\tilde\mathbb{{E}}[\tilde {{R_t}}|\mathcal{Z}_t]}=\frac{\tilde\mathbb{{E}}[\tilde {{R_t}}\phi({X_t},{Y_t})|\mathcal{Z}]}{\tilde\mathbb{{E}}[\tilde {{R_t}}|\mathcal{Z}]}.
\end{equation}
 The unnormalized conditional distribution of $\phi(X_t,Y_t)$, given $Z_t$, is defined as ${P_t}(\phi) = \tilde\mathbb{E}[\tilde {{R_t}}\phi({X_t},{Y_t})|\mathcal{Z}]$. Thus, we have the following Zakai equation:
\begin{equation}
\left\{
\begin{array}{rl}
d{P_t}(\phi)=& P_t(\mathcal{L}\phi)dt + {P _t}(\phi{h^T})d{Z_t},\\
{P _0}(\phi)=&\mathbb{E}[\phi(X_0,Y_0)].\\
\end{array}
\right.
\end{equation}
The $x$-marginal of ${P_t}(\phi)$ is defined as
\begin{equation}
\rho_{t}(\phi)=\int_{\mathbb{R}^{n+m}}\phi(x)P_t(dx,dy).
\end{equation}
Now we define a reduced, low dimensional signal-observation system
\begin{equation}
\label{6.3}
\left\{
\begin{array}{rl}
 d\bar{X}_{t}=&\bar {{f_1}}(\bar{X}_{t})dt + \bar{{g_1}}(\bar{X}_{t})dB_t^1 +\sigma_1 dL_1^{{\alpha _1}}, \\
 d\bar{Z}_{t}=& h(\bar{X}_{t})dt + d{B_t}.\\
\end{array}
\right.
\end{equation}
Here
\begin{equation}
\left\{
\begin{array}{rl}
\bar{{f_1}} (x) =& \int_{\mathbb {R}^{m}} {{f_1} (x,y){\mu _x}(dy)},\\
 \bar{G}(x)=&\int_{\mathbb{R}^{m}} {{g_1}(x,y){g_1}{{(x,y)}^T}{\mu _x}(dy)}.
\end{array}
\right.
\end{equation}
The unnormalized conditional distribution $\bar\rho_{t}(\phi)$ corresponding to the filter for the reduced system (\ref{6.3}) satisfies
the following (reduced) Zakai equation
\begin{equation}
\label{6.4}
\left\{
\begin{array}{rl}
d\bar\rho_{t}(\phi)= & \bar\rho_{t}(\bar\mathcal{{L}}\phi) dt + \bar \rho_{t}(\phi h^T)d\bar{Z_t}, \\
\bar\rho_{0}(\phi)=&\mathbb{E}[{\phi(X_0)}],
\end{array}
\right.
\end{equation}
where
\begin{equation}
\begin{array}{rl}
\bar\mathcal{{L}}\phi (x,t)= &\bar{f} _1(x)\cdot \nabla _x\phi(x,t)+ \frac{1}{2}\mbox{Tr}[{H_x}(\bar{G}(x)\phi(x,t))]\\
+& \int_{\mathbb{R}^n} {(\phi  (x + \sigma_1z_1,t) - \phi (x,t)){\nu _1}(dz_1)}.
\end{array}
\end{equation}
However, we are more interested in the reduced filtering problem with  the actual observation $Z_t$.
This leads us to rewrite the reduced Zakai equation (\ref{6.4}) as follows:
\begin{equation}
\label{6.5}
\left\{
\begin{array}{rl}
d\bar\rho_{t}(\phi)= & \bar\rho_{t}(\bar\mathcal{{L}}\phi) dt + \bar \rho_{t}(\phi h^T)d{Z_t}, \\
\bar\rho_{0}(\phi)=&\mathbb{E}[{\phi(X_0)}].
\end{array}
\right.
\end{equation}

\begin{lemma}
Assume that the following conditions are satisfied:
\begin{enumerate}
\item[(i)] The functions $\sigma^{il}, b^{i}$, and $ h^{l}$ for $i,j=1,2,\cdots, k $ and $ l=1,2,\cdots, k_1 $ and their derivatives of first order (in x) as well as the derivatives of second order (in x) are uniformly bounded by the constant $ N_0 $. The functions $\sigma^{il}, b^{i}$ are locally uniformly bounded in $ t $.
\item[(ii)] $\phi \in \mathcal{W}_{\rho}^{1,p}(\mathbb{R}^{k})$.\end{enumerate}
Let $u(t,x,\omega)$ be a generalized solution of problem
\begin{equation}
\label{uw}
\left\{
\begin{array}{rl}
du(t,x,\omega)&=Lu(t,x,\omega)dt+h^{l}(t,x,\omega)u(t,x,\omega)dB^{l}(t),\\
&(t,x,\omega) \in [T_0, T]\times \mathbb{R}^{k}\times \Omega,\\
u(T_0,x,\omega)&=\varphi(x,\omega),  (x,\omega)\in \mathbb{R}^{k}\times \Omega,
\end{array}
\right.
\end{equation}
where
\begin{equation}
\begin{array}{rl}
Lu(t,x)&=\frac{1}{2}\sigma^{ik}(t,x, \omega)\sigma^{jk}(t,x, \omega)\partial_{ij}^2 u(t,x,\omega)+b^{i}\partial_iu(t,x,\omega)\\
&+\frac{1}{2}\int_{\mathbb{R}^{k}}(u(t,x+z)+u(t,x-z)-2u(t,x))\frac{dz}{| z |^{k+\alpha}},
\end{array}
\end{equation}
and $v(t,x,\omega)$ is a generalized solution of problem
\begin{equation}
\label{ov}
\left\{
\begin{array}{rl}
-dv(t,x,\omega)&=[Lv(t,x,\omega)]dt+[h^{l}(t,x,\omega)v(t,x,\omega)]dB^{l}(t),\\
&(t,x,\omega) \in [T_0, T]\times \mathbb{R}^{k}\times \Omega,\\
v(T,x,\omega)&=\varphi(x,\omega), (x,\omega)\in \mathbb{R}^{k}\times \Omega.
\end{array}
\right.
\end{equation}
Then the following formulas hold
\begin{equation}
\label{u}
u(t,x)=\mathbb{E}(\varphi(Y(t,x,T_0))\chi_1(t,T_0)|\mathcal{F}_t^{T_0}),
\end{equation}
and
\begin{equation}
\label{vv}
v(t,x)=\mathbb{E}(\varphi(X(T,x,t))\chi_2(T,t)|\mathcal{F}_t^{T}),
\end{equation}
where $ X(t,x,s) $ is the forward stochastic differential equation with generator $ L $ and
$Y(t,x,s)$ is the backward stochastic differential equation with generator $ L $. Here $\chi_1(t,s)$
and $\chi_2(t,s)$ satisfy the following equations
\begin{equation}
\chi_1(t,s)=exp\left(\int_{[s,t]}h^{l}(\tau, Y(t,x,\tau))dB^{l}(\tau)-\frac{1}{2}\int_{[s,t]}h^{l}h^{l}(\tau, Y(t,x,\tau))d\tau \right),
\end{equation}
and
\begin{equation}
\chi_2(s,t)=exp\left(\int_{[t,s]}h^{l}(\tau, X(\tau,x,t))dB^{l}(\tau)-\frac{1}{2}\int_{[t,s]}h^{l}h^{l}(\tau, X(\tau,x,t))d\tau\right).
\end{equation}
\end{lemma}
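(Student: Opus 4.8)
The plan is to read the statement as a Feynman--Kac (stochastic characteristics) representation for the generalized solutions of the forward SPDE $(\ref{uw})$ and the backward SPDE $(\ref{ov})$: I would prove it by checking that the conditional expectations on the right of $(\ref{u})$ and $(\ref{vv})$ are themselves generalized solutions, in the weak sense defined above (tested against $y\in C_0^\infty$), of the respective equations, and then conclude by uniqueness of generalized solutions (which holds under the uniform boundedness assumption~(i)). The generator $L$ here splits into a local second-order part $\frac{1}{2}\sigma^{ik}\sigma^{jk}\partial^2_{ij}+b^i\partial_i$ and the nonlocal part $\frac{1}{2}\int_{\mathbb{R}^k}(u(x+z)+u(x-z)-2u(x))|z|^{-k-\alpha}\,dz$, which is (up to a constant) $-(-\Delta)^{\alpha/2}$ and corresponds to a symmetric $\alpha$-stable jump component; so the proof is the classical argument for linear stochastic PDEs and the Zakai equation, with Itô's formula for L\'evy-driven SDEs replacing the diffusion Itô formula. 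The symmetrized second difference in the nonlocal term is exactly what the symmetry of the jump measure produces after testing.

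First I would treat the forward equation $(\ref{uw})$, whose characteristics are the backward flow $Y(t,x,\cdot)$ (time run from $t$ down to $T_0$, with $Y(t,x,t)=x$). Set $\tilde u(t,x)=\mathbb{E}\big(\varphi(Y(t,x,T_0))\,\chi_1(t,T_0)\mid\mathcal{F}^{T_0}_t\big)$. Step one is the regularity $\tilde u\in L^p(\Omega;L_{loc}(\mathbb{R};W^{1,p}_\rho))$: this follows from the usual moment estimates for L\'evy-driven SDEs and differentiability of the flow in its initial value, using assumption~(i) on the coefficients and their first and second $x$-derivatives, while boundedness of $h^l$ ensures that $\chi_1$ and all its moments are finite, so that the weight $\rho(x)=(1+|x|)^\eta$ with $\eta\in[0,\alpha)$ is tolerated --- the bound $\eta<\alpha$ being precisely the threshold for finiteness of $\rho$-weighted moments of the $\alpha$-stable flow. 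Step two: fix $\psi\in C_0^\infty(\mathbb{R}^k)$, apply Itô's formula to $\tau\mapsto\varphi(Y(t,x,\tau))\chi_1(t,\tau)$ along the (backward) time variable, integrate against $\psi(x)\,dx$, invoke the stochastic Fubini theorem, and move the derivatives onto $\psi$ --- by integration by parts for the local part and by self-adjointness of $(-\Delta)^{\alpha/2}$ for the nonlocal part. One obtains the weak identity $(\tilde u(t),\psi)=(\varphi,\psi)+\int_{[T_0,t]}(\tilde u(s),L^*\psi)\,ds+\int_{[T_0,t]}(h^l\tilde u(s),\psi)\,dB^l(s)$, i.e. $\tilde u$ is a generalized solution of $(\ref{uw})$; uniqueness then gives $u=\tilde u$ and hence $(\ref{u})$.

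The backward equation $(\ref{ov})$ is handled by the mirror-image computation: the characteristics are now the forward flow $X(\cdot,x,\cdot)$, the multiplicative functional is $\chi_2$ run from $t$ up to the terminal time $T$, the data sit at $T$, and the sign change in $-dv=\dots$ records integration backward in time; the same Itô-plus-duality calculation, carried out with respect to the forward variable and conditioning on $\mathcal{F}^T_t$, yields $(\ref{vv})$. Since $\varphi$ is only assumed to lie in $\mathcal{W}^{1,p}_\rho(\mathbb{R}^k)$, each step is first carried out for mollified data $\varphi_n\in C_0^\infty$, for which the representations hold classically, and then one passes to the limit $\varphi_n\to\varphi$ in $\mathcal{W}^{1,p}_\rho$ using continuous dependence of both sides on the datum.

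I expect the main obstacle to be the passage between classical and generalized solutions while carrying the nonlocal term, together with the time-reversal and filtration bookkeeping ($\mathcal{F}^{T_0}_t$ versus $\mathcal{F}^T_t$): one must verify that the conditional expectations are adapted to the correct filtration, that the stochastic Fubini theorem applies in the weighted $L^p$ scale, and that the principal-value jump integral converges after testing against $\psi$ --- routine for the no-jump Zakai equation, but needing the symmetry of $\nu$ and the $\eta<\alpha$ weight in the $\alpha$-stable case. A secondary technical point is the differentiability of the L\'evy flow with respect to its initial condition in the weighted Sobolev norm, which is what actually lands $\tilde u$ and $v$ in $W^{1,p}_\rho$.
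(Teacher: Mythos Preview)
Your proposal is correct and follows the classical Feynman--Kac/stochastic-characteristics route: verify directly, via It\^o's formula for L\'evy-driven SDEs and duality, that the conditional expectations in $(\ref{u})$ and $(\ref{vv})$ are generalized solutions of $(\ref{uw})$ and $(\ref{ov})$, then invoke uniqueness. The paper proceeds differently. Rather than verifying both representations from scratch, it performs a time-reversal: it introduces the auxiliary forward problem obtained from $(\ref{ov})$ by the substitution $t\mapsto T-t$ (with the reversed Wiener process $B_T$), checks via the weak formulation that $v$ solves $(\ref{ov})$ if and only if $u(s):=v(T-s)$ solves this time-reversed forward problem, and thereby reduces the backward case to the forward one. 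For the forward representation itself the paper does not carry out the It\^o computation you outline but instead appeals to the existing probabilistic theory of Rozovskii (for the diffusion part) and of Xie--Zhang--Zhang (for the fractional/nonlocal part) to obtain existence, uniqueness, and the representation in $W^{1,p}_\rho$. In short: your argument is self-contained and treats the two SPDEs in parallel, while the paper collapses the backward case onto the forward one by time reversal and then cites the literature for the remaining representation. Your approach buys transparency and independence from external results; the paper's buys brevity and avoids duplicating the It\^o/duality calculation by exploiting the symmetry between $(\ref{uw})$ and $(\ref{ov})$.
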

\begin{proof}
Denote
$ a=(a^{ij})=\frac{1}{2}\sigma(t,x, \omega)\sigma^{T}(t,x, \omega) $. Before we proceed to the proof of the lemma, let us consider the problem
\begin{equation}
\left\{
\label{vvv}
\begin{array}{rl}
dv(t,x,\omega)&=[a^{ij}(T-t,x,\omega)\partial_{ij}^2 v(t,x,\omega))
+b^{i}(T-t,x,\omega)\partial_i v(t,x,\omega)\\
&+\frac{1}{2}\int_{\mathbb{R}^{k}}(v(t,x+z)+v(t,x-z)-2v(t,x))\frac{dz}{| z |^{k+\alpha}}]dt\\
&+h^{l}(T-t,x,\omega)v(t,x,\omega)dB_{T}^{l}(t),(t,x,\omega) \in [0, T-T_0]\times \mathbb{R}^{k}\times \Omega,\\
v(0,x,\omega)&=\varphi(x,\omega), (x,\omega)\in \mathbb{R}^{k}\times \Omega.
\end{array}
\right.
\end{equation}

By the definition of a backward predictable stochastic process, the coefficients in equation (\ref{vvv}) are predictable relative to the family of $\sigma-$ algebra ${\mathcal{F}_{T}^{T-t}}$ with $ t \in [0,T-T_0]$. The process $ B_{T}$ is a Wiener martingale
with respect to the same family and the initial condition $\varphi$ is measurable with respect to
the minimal $\sigma$-algebra of this family.

Let $v(t,x)$ be a generalized solution of problem. Then by the definition of generalized
solution, for every $ y \in C_0^{\infty}(\mathbb{R}^{k})$, the following equality holds on $[T_0, T]\times \Omega$,
\begin{equation}
\label{change}
\begin{array}{rl}
(v(T-s,x),y)&=(\varphi,y)+\int_{[0,T-s]}\left[-(a^{ij}(T-s)\partial_iv(s,x),y_j)\right.\\
&+(b^{i}(T-s)-a^{ij}_{j}(T-s))\partial_iv(s,x))\\
&\left.+\frac{1}{2}\int_{\mathbb{R}^{k}}(v(s,x+z)+v(s,x-z)-2v(s,x))\frac{dz}{| z |^{k+\alpha}},y)\right]ds \\
&+\int_{[0,T-s]}h^{l}(T-t)v(t,x)dB^{l}_{T}(s).
\end{array}
\end{equation}
Denote $u(s)=v(T-s)$ in equation (\ref{change}),
we find that $u(t)$ satisfies the following equation on $[T_0, T]\times \Omega $, for every $ y \in C_0^{\infty}(\mathbb{R}^{k})$.
\begin{equation}
\label{chdd}
\begin{array}{rl}
(u(t,x),y)&=(\varphi,y)+\int_{[t,T]}\bigl[-(a^{ij}\partial_iv(s,x),y_j)
+((b^{i}-a^{ij}_{j})\partial_iu(t,x))\\
&+\frac{1}{2}\int_{\mathbb{R}^{k}}(v(t,x+z)+v(t,x-z)-2v(t,x))\frac{dz}{| z |^{k+\alpha}},y)\bigr]ds \\
&+\int_{[t,T]}h^{l}v(s,x)dB^{l}(s).
\end{array}
\end{equation}
Thus $ u $ is an generalized solution of problem (\ref{ov}).

On the other hand, changing the variables in equality (\ref{chdd}), we obtain that $v(t,x)$ is a
generalized solution of problem (\ref{vvv}). Thus we have proved that problems (\ref{ov}) and (\ref{vvv}) are equivalent. Therefore all the results obtained for problem (\ref{ov}) are naturally carried over to problem (\ref{vvv}). For the problem (\ref{vvv}), we can use the similar method to obtain the existence and uniqueness of $\mathbb{W}^{1,p}_{\rho}$- solution to stochastic fractal equations by using purely probabilistic argument (see \cite {blr, xzz}).
This completes the proof of this lemma.
\end{proof}
Now we show that the reduced  system  approximates the original system, by examining the corresponding Zakai equations. Before describing the theorem, we start by describing the probabilistic representation for semi-linear stochastic fractal equations. We proceed to consider the probability measure $\mathbb{\widetilde P}$.  Note that the process $Z_t$ is a Brownian motion under $\mathbb{\widetilde P}$.  For convenience, we rewrite $\mathbb{P}$ and  $Z_t$  as $\mathbb{\widetilde P}$ and $B_t$, respectively. By Lemma 1, we know   $v^{\phi}(t,x,y)=\tilde\mathbb{{E}}_{x,y}[\phi(X_{{T}})\tilde{R}_{t,T}|\mathcal{Y}_{t,T}]$ solves a stochastic partial differential equation (SPDE).
\begin{equation}
\label{oriback}
\left\{
\begin{array}{rl}
 -dv^{\phi}(t,x,y)=&\mathcal{L}v^{\phi}(t,x,y)dt +h(x)^{T}v^{\phi}(t,x,y)d\overleftarrow{B_t}. \\
 v^{\phi}(T,x,y) =&\phi(x).
\end{array}
\right.
\end{equation}
Here $d\overleftarrow{B_t}$ denotes It\^o's backward integral. Likewise, we introduce $\overline v^{\phi}(t,x)=\tilde\mathbb{{E}}_{x}[\phi(\bar{X}_{T})\tilde{R}_{t,T}|\mathcal{Y}_{t,T}]$, and then $\overline v(t,x)$ solves the following SPDE
\begin{equation}
\label{reduback}
\left\{
\begin{array}{rl}
 -d\overline v^{\phi}(t,x)=&\overline{\mathcal{L}}\overline v^{\phi}(t,x)dt + h(x)^{T}\overline v^{\phi}(t,x)d\overleftarrow{B_t}, \\
 \overline v^{\phi}({ T},x)=&\phi(x).
\end{array}
\right.
\end{equation}
By the version of Girsanov's change of measure theorem and Markov property of $(X_t, Y_t)$, we know that for any $t\in [0,T]$: $P_t(v^{\phi}(t))=\rho_{T}(\phi)$. In particular, we have
\begin{equation}
\rho_{T}(\phi)=\int v^{\phi}(0,x,y)\mathbb{P}_{(X_0,Y_0)}(dx,dy).
\end{equation}
Similarly, we have
\begin{equation}
\bar{\rho}_{T}(\phi)=\int \bar{v}^{\phi}(0,x)\mathbb{P}_{(X_0)}(dx).
\end{equation}
This is our main result on the comparison between the original filter and the reduced filter. This is desirable  when only the slow component $X_t$ is observable.
\begin{theorem}
Under the hypotheses (H.1)-(H.4) and for $p$ with $1 \textless p  \textless \alpha_1$, there exists a positive constant $ \varepsilon_{0}$ such that for $ \varepsilon \in (0, \varepsilon_{0})$, the following estimate holds
\begin{equation}\label{result}
\mathbb{E}[|\rho_{T}(\phi)- \bar{\rho}_{T}(\phi)|^{p}]\leq C \varepsilon,
\end{equation}
with a positive constant $C$ independent of $ \varepsilon_{0}$.
This implies that the reduced filter approximates the original filter, as the scale parameter $\varepsilon$ tends to zero.
\end{theorem}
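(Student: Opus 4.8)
The plan is to move the comparison from the two filters to the two slow signal processes, using the probabilistic representations already recorded just above the statement, and then to reduce the whole estimate to a strong averaging bound for the slow variable. Since the reduced filter is initialised with the law of $X_0$, the measure $\mathbb{P}_{(X_0)}$ is the $x$-marginal of $\mathbb{P}_{(X_0,Y_0)}$, so that
\[
\rho_{T}(\phi)-\bar{\rho}_{T}(\phi)=\int\bigl[v^{\phi}(0,x,y)-\bar v^{\phi}(0,x)\bigr]\,\mathbb{P}_{(X_0,Y_0)}(dx,dy).
\]
Inserting the representations $v^{\phi}(0,x,y)=\tilde{\mathbb{E}}_{x,y}[\phi(X_{T})\tilde R_{0,T}\mid\mathcal{Y}_{0,T}]$ and $\bar v^{\phi}(0,x)=\tilde{\mathbb{E}}_{x}[\phi(\bar X_{T})\tilde R_{0,T}\mid\mathcal{Y}_{0,T}]$ (Lemma 1 applied to (\ref{oriback}) and (\ref{reduback})), and recalling that $X_{t}$ is the slow component of the original system while $\bar X_{t}$ solves (\ref{6.3}), \emph{both driven by the same Brownian motion $B^{1}$ and the same $\alpha_{1}$-stable process $L_{1}^{\alpha_{1}}$}, gives an expression for $\rho_T(\phi)-\bar\rho_T(\phi)$ as an average of conditional expectations of $(\phi(X_T)-\phi(\bar X_T))\tilde R_{0,T}$.

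Next I would apply the conditional Jensen inequality, then Jensen in the $\mathbb{P}_{(X_0,Y_0)}$-integral, then H\"older's inequality to split off a factor $\tilde R_{0,T}^{q}$, and finally use that $h\in C_{b}$ (Hypothesis H.3) makes $\tilde{\mathbb{E}}[\tilde R_{0,T}^{q}]$ finite for every $q\ge 1$; together with a Lipschitz (or, if need be, first mollified) $\phi$ this yields
\[
\mathbb{E}\bigl[|\rho_{T}(\phi)-\bar{\rho}_{T}(\phi)|^{p}\bigr]\le C\bigl(\tilde{\mathbb{E}}\bigl[|X_{T}-\bar X_{T}|^{p'}\bigr]\bigr)^{p/p'}
\]
for some $p'$ with $p<p'<\alpha_{1}$. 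The point making this splitting legitimate is a structural cancellation: the terms $\sigma_{1}\,dL_{1}^{\alpha_{1}}$ driving $X_{t}$ and $\bar X_{t}$ are identical and disappear in $d(X_{t}-\bar X_{t})$, so $X_{t}-\bar X_{t}$ solves an It\^o equation driven only by $B^{1}$ and a drift and hence has moments of all orders, in particular of order $p'$.

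It then remains to prove the strong averaging estimate $\tilde{\mathbb{E}}[|X_{T}-\bar X_{T}|^{p'}]\le C\varepsilon$. For this I would run the Khasminskii averaging scheme: partition $[0,T]$ into subintervals of length $\delta=\delta(\varepsilon)$, freeze the slow variable on each subinterval, compare the fast component $Y_{t}$ with the auxiliary fast process $\hat Y$ generated by $\mathcal{L}_{2}$ with the slow variable frozen (run on the fast time scale $t/\varepsilon$), and use Hypothesis H.4 — which supplies the invariant measure $\mu_{x}$ together with exponential ergodicity of $\hat Y$ — to show that the time averages of $f_{1}(\cdot,Y_{s})$ and $g_{1}g_{1}^{T}(\cdot,Y_{s})$ over each subinterval are close to $\bar f_{1}$ and $\bar G$. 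Feeding this, the a priori bounds $\sup_{t\le T}\tilde{\mathbb{E}}[|X_{t}|^{q}]+\sup_{t\le T}\tilde{\mathbb{E}}[|Y_{t}|^{q}]<\infty$ for $q<\min(\alpha_{1},\alpha_{2})$ (from H.1 and H.4), and the Lipschitz hypothesis H.1 into a Gronwall estimate for $\tilde{\mathbb{E}}\sup_{t\le T}|X_{t}-\bar X_{t}|^{p'}$, and then optimising $\delta$ in terms of $\varepsilon$, should give the $O(\varepsilon)$ rate; the martingale parts are compared through the quadratic variations $G_{1}$ and $\bar G$, so no measurable square root $\bar g_{1}$ is needed at this level.

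The hard part will be carrying out this averaging step with only the fractional moments permitted by the $\alpha_{2}$-stable noise in the fast equation: the ergodic estimates for $Y_{t}$ against $\mu_{x}$ and the control of the discretisation error have to be done in $L^{q}$ with $q<\alpha_{2}$, so the usual $L^{2}$ Khasminskii bookkeeping must be replaced by one based on Hypothesis H.4 and the mixing rate it produces, while keeping all constants independent of $\varepsilon_{0}$ and tracking the dependence on $\delta$ sharply enough that the final rate is genuinely linear in $\varepsilon$. A secondary technical point is that Hypothesis H.1 only gives $f_{1},g_{1}$ linear growth, so either a localisation/stopping-time argument or a mild boundedness assumption on $f_{1},g_{1}$ is needed in order that the time averages of $f_{1}(x,\cdot)$ and $g_{1}(x,\cdot)$ against $\mu_{x}$ be controlled uniformly in $x$.
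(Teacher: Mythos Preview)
Your proposal follows essentially the same route as the paper: write $\rho_T(\phi)-\bar\rho_T(\phi)$ via the probabilistic representations of $v^{\phi}$ and $\bar v^{\phi}$, push the $p$-th power inside by Jensen, separate off the Radon--Nikodym factor $\tilde R_{0,T}$, and reduce everything to a strong averaging bound on $|X_T-\bar X_T|$. The paper is considerably terser than you are---it factors the conditional expectation rather informally in place of your H\"older step, and for the averaging estimate it simply cites Bao--Yin--Yuan \cite{bg} for $\lim_{\varepsilon\to 0}\mathbb{E}|X_T-\bar X_T|^{p}=0$ and then asserts the $C\varepsilon$ rate without further argument---so your attention to the cancellation of the $\alpha_1$-stable term in $d(X_t-\bar X_t)$ and to actually extracting a linear rate from the Khasminskii discretisation goes beyond what the paper itself supplies.
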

\begin{proof}\hspace{2cm}
\begin{equation}
\begin{array}{rl}
&\mathbb{E}[|\rho_{T}(\phi)- \bar{\rho}_{T}(\phi)|^{p}]\\
=&\mathbb{E}[\left | \int (v^{\phi}(0,x,y)-\bar{v}^{\phi}(0,x))\mathbb{P}_{(X_0,Y_0)}(dx,dy)\right|^{p} ], \\
\leq& \mathbb{E}[\int\left\| v^{\phi}(0,x,y)-\bar{v}^{\phi}(0,x)\right|^{p}\mathbb{P}_{(X_0,Y_0)}(dx,dy)],\\
=&\int\mathbb{E}\left[\left| v^{\phi}(0,x,y)-\bar{v}^{\phi}(0,x)\right|^{p}\right]\mathbb{P}_{(X_0,Y_0)}(dx,dy),\\
=&\int\mathbb{E}\left[\left|\tilde\mathbb{{E}}_{x,y}[\phi(X_{{T}})\tilde{R}_{0,T}|\mathcal{Y}_{0,T}]-
\tilde\mathbb{{E}}_{x}[\phi(\bar{X}_{{T}})\tilde{R}_{0,T}|\mathcal{Y}_{0,T}]\right|^{p}\right]\mathbb{P}_{(X_0,Y_0)}(dx,dy),\\
\leq& C \int\mathbb{E}\left|\tilde\mathbb{{E}}_{x}[(\phi(X_{{T}})-\phi(\bar{X}_{{T}}))\tilde{R}_{0,T}|\mathcal{Y}_{t,T}]\right|^{p}
\mathbb{P}_{(X_0,Y_0)}(dx,dy),\\
=& C \int\mathbb{E}\left|\mathbb{E}[\phi(X_{T})-\phi(\bar{X}_{T})|\mathcal{Y}_{T}]
\tilde\mathbb{{E}}_{x}[\tilde{R}_{0,T}|\mathcal{Y}_{0,T}]\right|^{p}\mathbb{P}_{(X_0,Y_0)}(dx,dy),\\
\leq&  C \int \mathbb{E} \left|\mathbb{E}[X_{T} -\bar{X}_{T}|\mathcal{Y}_{T}]\right |^{p}\mathbb{P}_{(X_0,Y_0)}(dx,dy).
\end{array}
\end{equation}
Using the Jensen's inequality, we have
\begin{equation}
\mathbb{E}[|\rho_{T}(\phi)- \bar{\rho}_{T}(\phi)|^{p}]\leq \int \mathbb{E}| X_{T} -\bar{X}_{T}|^{p}\mathbb{P}_{(X_0,Y_0)}(dx,dy).
\end{equation}
Applying a similar  argument from \cite{bg}, we obtain
\begin{equation}
\lim\limits_{\epsilon \to 0}\mathbb{E}| X_{T} -\bar{X}_{T}|^{p}=0
\end{equation}
Finally, we conclude that
\begin{equation}
\mathbb{E}[|\rho_{T}(\phi)- \bar{\rho}_{T}(\phi)|^{p}]\leq C\varepsilon.
\end{equation}
This completes the proof of Theorem 1.
\end{proof}
\section{Parameter estimation}

In this section, we consider parameter estimation in the following slow-fast dynamical system
\begin{equation}
\label{ori}
\left\{
\begin{array}{rl}
& dx(t)= \{Ax(t)+{f_1}(x(t),y(t),\theta)\}dt, \; x \in  \mathbb{R}^n, \\
& dy(t) = \frac{1}{\ep}\{By(t)+{f_2}(x(t),y(t))\}dt + \frac{1}{\ep ^{\frac{1}{\alpha }}}dL_t^{{\alpha}}, \;   y \in \mathbb{R}^m,
\end{array}
\right.
\end{equation}
where $A$ is an $ n\times n $ matrix, $-B$ is a positive definite matrix with eigenvalues $0 < \lambda_1 < \lambda_2 <\cdots < \lambda_m $. $\theta $ is an unknown parameter defined in an compact set $\Theta$ of $ \mathbb{R}^n $.
\par
{\bf Hypothesis  H.5. }
For all $\theta\in\Theta $, the function $f_1$ is uniformly bounded and Lipschitz w.r.t x and y. The function $f_2 $ is smooth with the first order derivatives bounded by $\lambda_1$.

Define an averaged, low dimensional slow stochastic dynamical  system in $\mathbb{R}^n$ (see \cite{andrew}), as in the previous section
\begin{equation}
\label{red}
d\bar{x}(t)=A\bar{x}(t)+\bar {{f_1}}(\bar{x}(t),\theta)dt,
\end{equation}
where
\begin{equation*}
\bar{{f_1}} (\bar{x},\theta)   \triangleq   \int_{\Rm} {{f_1} (x,y,\theta){\mu _x}(dy)}.
\end{equation*}
We will estimate the unknown parameter $\theta$, based on this low dimensional slow system   (\ref{red}) but using the observations on the slow component $x$ only. Denote the solution of the original slow-fast system (\ref{ori}) with actual parameter $\theta_0$ by $(x(t,\theta_0),y(t,\theta_0))$,  and the solution of the  slow system  (\ref{red}) with parameter $\theta$ by $\bar x(t,\theta)$. We recall the following lemma.
\begin{lemma}
Under hypothesis (H.6),  the following strong convergence holds
\begin{equation}
\lim_{\ep\to 0}\E|x(t,\theta_0)-\bar x(t,\theta_0)|^p=0, ~t\in[0,T]~and~p\in(1,\alpha).
\end{equation}
\end{lemma}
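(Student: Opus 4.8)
The plan is to prove Lemma~2 by Khasminskii's time–discretization (averaging) technique, adapted to the $\alpha$-stable setting in which only $p$-th moments with $p\in(1,\alpha)$ are finite. Note first that the slow equation in (\ref{ori}) carries no Brownian or L\'evy forcing of its own, so $x(t,\theta_0)$ is random only through its dependence on the fast variable $y$; the comparison $x(t,\theta_0)-\bar x(t,\theta_0)$ is therefore essentially a Gronwall estimate once the averaged forcing term is controlled. \emph{Step 1 (a priori moment bounds).} I would first show that there is $C_T$, independent of $\ep$, with $\sup_{0\le t\le T}\E|x(t,\theta_0)|^p\le C_T$ and $\sup_{0\le t\le T}\E|y(t,\theta_0)|^p\le C_T$ for every $p\in(1,\alpha)$. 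The bound on $x$ follows from the uniform boundedness of $f_1$ in (H.5) and Gronwall. The bound on $y$ is the delicate one: applying the It\^o formula to a smooth approximation of $|y|^p$ (which is not $C^2$ at the origin when $p<2$), using the dissipativity $\langle y,By\rangle\le-\lambda_1|y|^2$ together with the derivative bound on $f_2$, and estimating the jump part by Kunita's first inequality, one finds that the $\ep$-scalings of the drift and of the stable noise cancel in the stationary estimate — which is exactly the mechanism making $\mu_x$ independent of $\ep$.

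\emph{Step 2 (frozen fast process and ergodicity).} Fix $x\in\Rn$ and, after the time change $s=t/\ep$ and freezing the slow input, consider
\begin{equation}
dY_s^{x}=\big(BY_s^{x}+f_2(x,Y_s^{x})\big)ds+dL_s^{\alpha}.
\end{equation}
By the dissipativity of $B$ and (H.5), this equation has a unique invariant measure $\mu_x$ and is exponentially ergodic in the $L^p$ sense: there are $\beta>0$ and $C$ with $\big|\E f_1(x,Y_s^{x},\theta)-\bar f_1(x,\theta)\big|\le C(1+|x|)e^{-\beta s}$, and the map $x\mapsto\bar f_1(x,\theta)$ is Lipschitz uniformly in $\theta\in\Theta$. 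I would record these two facts (mixing rate and Lipschitz dependence) with constants independent of $x$ and $\theta$.

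\emph{Step 3 (three–way comparison and Gronwall).} Partition $[0,T]$ into blocks of length $\delta=\delta(\ep)$ with $\delta\to0$ and $\delta/\ep\to\infty$. On $[k\delta,(k+1)\delta)$ define the auxiliary fast process $\hat y$ by the second line of (\ref{ori}) with slow input frozen at $x(k\delta)$ and $\hat y(k\delta)=y(k\delta)$, and the auxiliary slow process
\begin{equation}
\hat x(t)=x(k\delta)+\int_{k\delta}^{t}\big(A\hat x(s)+f_1(x(k\delta),\hat y(s),\theta_0)\big)ds.
\end{equation}
Then $\E|x(t)-\bar x(t)|^p\le C\,\E|x(t)-\hat x(t)|^p+C\,\E|\hat x(t)-\bar x(t)|^p$. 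The first term is handled by the Lipschitz property of $f_1$, a block-wise bound $\E\sup_{k\delta\le s\le(k+1)\delta}\big(|x(s)-x(k\delta)|^p+|y(s)-\hat y(s)|^p\big)\le C\delta$ coming from Step~1 and the (dissipative) structure of the fast equation, and Gronwall, yielding a term of order $\delta$. For the second term, on each block $\hat x-\bar x$ is driven by $\int\!\big(f_1(x(k\delta),\hat y(s),\theta_0)-\bar f_1(\bar x(s),\theta_0)\big)ds$; writing $\bar f_1(\bar x(s))=\bar f_1(x(k\delta))+[\bar f_1(\bar x(s))-\bar f_1(x(k\delta))]$, undoing the time change, and invoking the mixing estimate of Step~2 bounds the oscillatory part by $C(\ep/\delta+\delta)$, while the Lipschitz remainder is absorbed by Gronwall. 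Summing the blocks and using the discrete Gronwall inequality gives $\sup_{0\le t\le T}\E|x(t,\theta_0)-\bar x(t,\theta_0)|^p\le C_T(\delta+\ep/\delta)$; choosing $\delta=\sqrt{\ep}$ gives a bound of order $\sqrt{\ep}\to0$, which proves the lemma (and in fact provides a convergence rate). Alternatively, once Step~1 and Step~2 are in place, one may simply invoke the averaging results of \cite{bg,andrew}.

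\emph{Main obstacle.} The principal difficulty is the infinite variance of the $\alpha$-stable noise: no $L^2$/It\^o-isometry argument is available, so every estimate — the a priori bounds, the block estimates for $y-\hat y$, and the closing Gronwall step — must be carried out in $L^p$ with $p<\alpha$, using Kunita-type or Burkholder--Davis--Gundy-type inequalities for discontinuous martingales and the It\^o formula for $|\cdot|^p$, which fails to be twice differentiable at the origin. Intertwined with this is the need for the exponential mixing rate $\beta$ of the frozen fast flow to be \emph{uniform} in the frozen slow variable and compatible with these $L^p$ estimates; this is precisely where the dissipativity of $B$ and the smallness of $Df_2$ (bounded by $\lambda_1$) are used, and it is what fixes the $\ep$-independence of $\mu_x$.
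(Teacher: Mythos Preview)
Your proposal is correct: the Khasminskii time-discretization argument you outline is precisely the method of \cite{bg}, to which the paper's own proof simply defers in a single sentence (``The proof of average principle is similar the the infinite case which has been studied in \cite{bg}, Thus we omit here''). Your sketch therefore supplies the detail the paper omits, and your identification of the $L^p$-with-$p<\alpha$ estimates and the uniform-in-$x$ exponential mixing rate as the main technical obstacles is exactly on point.
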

\begin{proof}
The proof of average principle is similar the the infinite case which has been studied in \cite{bg}, Thus we omit here.
\end{proof}
We take $F(\theta)=\int_0^T | x(t,\theta_0)-\bar x(t,\theta)|^p dt$ as the objective function and assume there is a unique $ \hte^{\epsilon}\in\Theta $ such that $F(\hte^{\epsilon})=\min\limits_{\theta\in\Theta}
F(\theta)$. This $\hte^{\epsilon}$ is our estimated parameter value. Then we can state our main result as follows.
\begin{theorem}
Under hypothesis (H.6), the estimated parameter value converges to the true parameter value, as the scale parameter $\ep$ approaches zero. That is,
\begin{equation}\label{result}
\lim\limits_{\ep\to 0}\hte^{\epsilon}=\theta_0.
\end{equation}
\end{theorem}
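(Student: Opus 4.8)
The plan is to read this as a standard consistency statement for an argmin (M-)estimator: the random criterion $F^{\ep}(\theta)=\int_0^T|x(t,\theta_0)-\bar x(t,\theta)|^p\,dt$ (which depends on $\ep$ through the solution $x(\cdot,\theta_0)$ of the multiscale system (\ref{ori})) should be close, uniformly in $\theta$, to the deterministic limiting criterion $\bar F(\theta)=\int_0^T|\bar x(t,\theta_0)-\bar x(t,\theta)|^p\,dt$, whose unique minimiser is $\theta_0$ with $\bar F(\theta_0)=0$ (this identifiability being part of the standing hypotheses, see below). It is convenient to work with the $p$-th roots $G^{\ep}(\theta):=\|x(\cdot,\theta_0)-\bar x(\cdot,\theta)\|_{L^p([0,T])}$ and $\bar G(\theta):=\|\bar x(\cdot,\theta_0)-\bar x(\cdot,\theta)\|_{L^p([0,T])}$, which have the same minimisers as $F^{\ep}$ and $\bar F$ respectively. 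The proof then splits into (a) uniform-in-$\theta$ closeness of $G^{\ep}$ to $\bar G$, supplied by the averaging principle of Lemma 2, and (b) the classical deduction that uniform closeness of the criteria together with well-separatedness of the limiting minimiser forces $\hte^{\ep}\to\theta_0$.

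For (a), I would decompose $x(\cdot,\theta_0)-\bar x(\cdot,\theta)=\bigl(\bar x(\cdot,\theta_0)-\bar x(\cdot,\theta)\bigr)+\bigl(x(\cdot,\theta_0)-\bar x(\cdot,\theta_0)\bigr)$ and use the triangle inequality in $L^p([0,T])$ to get
\[
\sup_{\theta\in\Theta}\bigl|G^{\ep}(\theta)-\bar G(\theta)\bigr|\ \le\ \Bigl(\int_0^T|x(t,\theta_0)-\bar x(t,\theta_0)|^p\,dt\Bigr)^{1/p}=:\delta^{\ep},
\]
the essential point being that the bound $\delta^{\ep}$ no longer depends on $\theta$. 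By Tonelli's theorem and Lemma 2, $\E[(\delta^{\ep})^p]=\int_0^T\E|x(t,\theta_0)-\bar x(t,\theta_0)|^p\,dt\to0$ as $\ep\to0$, where interchanging the $\ep$-limit with the $t$-integral is justified by the uniform-in-$\ep$ moment bounds that already underlie the proof of Lemma 2. Hence $\delta^{\ep}\to0$ in $L^p(\Omega)$, and in particular in probability.

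For (b), from $G^{\ep}(\hte^{\ep})\le G^{\ep}(\theta_0)$ (definition of $\hte^{\ep}$) and $\bar G(\theta_0)=0$ one obtains
\[
\bar G(\hte^{\ep})\ \le\ G^{\ep}(\hte^{\ep})+\delta^{\ep}\ \le\ G^{\ep}(\theta_0)+\delta^{\ep}\ \le\ \bar G(\theta_0)+2\delta^{\ep}\ =\ 2\delta^{\ep},
\]
so $\bar G(\hte^{\ep})\to0$ in probability. Fixing $\eta>0$, the set $\Theta\setminus B(\theta_0,\eta)$ is compact and $\bar G$ is continuous with $\theta_0$ as its unique minimiser, so $m_\eta:=\inf_{\theta\in\Theta\setminus B(\theta_0,\eta)}\bar G(\theta)>0$; hence $\{|\hte^{\ep}-\theta_0|\ge\eta\}\subseteq\{\bar G(\hte^{\ep})\ge m_\eta\}$, whose probability tends to $0$. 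Therefore $\hte^{\ep}\to\theta_0$ in probability, and since $\Theta$ is bounded also in $p$-th moment, which is the assertion of Theorem 2.

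The real obstacle is not step (a) or the generic part of step (b), but the identifiability and well-separatedness of the limiting minimiser: one must know that $\theta\mapsto\bar x(\cdot,\theta)$ genuinely separates parameters on $[0,T]$ (so that $\bar G(\theta)=0$ implies $\theta=\theta_0$) and that $\bar G$ depends continuously on $\theta$, which in turn needs suitable regularity in $\theta$ of the averaged drift $\bar f_1(\cdot,\theta)$ and continuous dependence of the ODE (\ref{red}) on its parameter. This is exactly what hypothesis (H.6) must furnish; absent it, $\bar G$ could attain its minimum on a larger set and the argmin conclusion would break down. The remaining analytic ingredient — promoting the pointwise-in-$t$ convergence of Lemma 2 to $\E[(\delta^{\ep})^p]\to0$ — is routine given the averaging estimates adapted from \cite{bg}.
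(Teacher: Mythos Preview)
Your argument is correct and is the standard M-estimator consistency proof: uniform-in-$\theta$ closeness of the criteria via the triangle inequality in $L^p([0,T])$, followed by the well-separatedness argument on the compact parameter set. The paper takes a different and more direct route. Instead of invoking well-separatedness of the limiting criterion abstractly, it expands the difference $\bar x(t,\theta_0)-\bar x(t,\hte^{\ep})$ explicitly: using the variation-of-constants formula for the linear-plus-Lipschitz ODE (\ref{red}) and the mean value theorem in $\theta$, one obtains
\[
\bar x(t,\theta_0)-\bar x(t,\hte^{\ep})=(\theta_0-\hte^{\ep})\int_0^t e^{A(t-s)}\nabla_\theta\bar f_1(\bar x(s,\theta'),\theta')\,ds
\]
for some intermediate $\theta'$, which, after taking $L^p([0,T])$-norms, yields the quantitative bound $|\theta_0-\hte^{\ep}|\le C\,F(\theta_0)/G(\theta')$ whenever the sensitivity $G(\theta'):=\int_0^T\bigl|\int_0^t e^{A(t-s)}\nabla_\theta\bar f_1\,ds\bigr|^p dt$ is positive. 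The trade-off is this: the paper's argument needs $C^1$ dependence of $\bar f_1$ on $\theta$ and is most transparent for a scalar parameter, but it delivers a \emph{rate} (the error is controlled by $F(\theta_0)$ itself, hence by the averaging error of Lemma~2); your argument needs only continuity in $\theta$ and the identifiability you single out, works verbatim for vector-valued $\theta$, and separates the probabilistic input (Lemma~2) from the deterministic argmin step more cleanly, at the cost of giving only consistency without an explicit rate. Both approaches ultimately lean on the same structural assumption---nondegeneracy of the map $\theta\mapsto\bar x(\cdot,\theta)$---which in the paper appears as $G(\theta')>0$ and in your version as $m_\eta>0$.
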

\begin{proof}
Note that
\begin{equation}\label{eq0}
\E|\bar x(t,\theta_0)-\bar x(t,\hte^{\ep})|^p \le C\; (\E|x(t,\theta_0)-\bar x(t,\theta_0)|^p+\E|x(t,\theta_0)-\bar x(t,\hte^{\ep})|^p),
\end{equation}
for some positive constant $C$. Integrating both sides with respect to time, we get
\begin{equation}\label{eq1}
\int_0^T \E|\bar x(t,\theta_0)-\bar x(t,\hte^{\ep})|^p dt\le C(F(\theta_0)+F(\hte^{\ep}))\le 2CF(\theta_0).
\end{equation}
We calculate the difference between $\bar x(t,\theta_0)$ and $ \bar{x}(t,\hte^{\ep})$ to obtain
\begin{equation*}
\dot{\bar x}(t,\theta_0)-\dot{\bar{x}}(t,\hte^{\ep})=A[\bar x(t,\theta_0)-\bar{x}(t,\hte^{\ep})]+[\bar f_1(\bar{x}(t,\theta_0),\theta_0)-\bar {{f_1}}(\bar{x}(t,\hte^{\ep}),{\hte^{\ep}})].
\end{equation*}
By the variation of constant formula, we have
\begin{equation*}
\bar x(t,\theta_0)-\bar{x}(t,\hte^{\ep})=\int_{0}^{t}e^{A(t-s)}[\bar f_1(\bar x(s,\theta_0),\theta_0) - \bar{f_1}(\bar{x}(s,\hte^{\ep}),{\hte^{\ep}})]ds.
\end{equation*}
Using the mean value theorem, we obtain
\begin{equation}\label{eq2}
\bar x(t,\theta_0)-\bar{x}(t,\hte^{\ep})=(\theta_0-\hte^{\ep})\int_0^t e^{A(t-s)}\nabla_\theta\bar f_1(\bar x(s,\theta'), \theta')ds,
\end{equation}
for some $\theta'=\hte^{\ep}+l(\theta_0-\hte^{\ep})$ with $l\in(0,1)$.
Denote
\begin{equation*}
G(\theta')=\int_0^T\left|\int_0^t e^{A(t-s)}\nabla_\theta\bar f_1(\bar x(s,\theta'),\theta')ds\right|^p dt.
\end{equation*}
If $G(\theta')>0$, we have by (\ref{eq1}) and (\ref{eq2})
\begin{equation}
|\theta_0-{\hat{\theta}}^{\epsilon}|\le \frac{C}{G(\theta')}F(\theta_0),
\end{equation}
which implies the desired result (\ref{result}) due to Lemma 1.

If $G(\theta')=0$, we have $\int_0^T \E|\bar x(t,\theta_0)-\bar{x}(t,\hte^{\ep})|^p dt=0$. Then $F(\theta_0)=F(\hte^{\ep})$, which implies $\theta_0=\hte^{\ep}$ since $F(\theta)$ admits a unique $\hte^{\ep}$ such that $F(\hte^{\ep})=\min\limits_{\theta\in\Theta}F(\theta)$. This completes the proof of this theorem.
\end{proof}

We proceed to the following two dimensional slow-fast stochastic dynamical system in $\mathbb{R}^{2}$ to verify the parameter estimation method.
\begin{example}Consider the following slow-fast stochastic dynamics
\begin{equation}
\label{ex}
\left\{
\begin{array}{rl}
dx(t)&=-x(t)dt+ cos(\theta x)e^{-y^2}dt,\\
dy(t)&=-\frac{y(t)}{\ep}dt+ \frac{1}{\ep^{\frac{1}{\alpha }}}dL_t^{{\alpha }}.
\end{array}
\right.
\end{equation}
Using a result in \cite{ar}, we find the   invariant measure   $\mu(dx)=\rho(x)dx$ with density
\begin{equation}
\rho(x)=\frac{1}{{2\pi}}\int_{\mathbb{R}}e^{ix\xi}e^{-\frac{1}{\alpha}|\xi |^{\alpha}}d\xi.
\end{equation}
Then the averaged, one dimensional, slow equation is
\begin{equation}
d\bar{x}(t)=-\bar{x}(t)+ a\cos(\theta\bar{x}(t))dt,
\end{equation}
 where
\begin{equation*}
a=\frac{1}{\sqrt{4\pi}}\int_{\mathbb{R}}e^{-\frac{1}{4}\xi^2-\frac{1}{\alpha}| \xi|^{\alpha}}d\xi.\\
\end{equation*}

In the following numerical simulations, we apply the Euler-Maruyama method to generate $M$ different slow paths $\{x^{i,j}:x^{i,j}=x^{i,j}(t_i,\theta_0),i=1,2,..,N;j=1,2,...,M\}$  as the observations. Then we minimize the objective function $F(\theta)=\sum_{j=1}^M\sum_{i=0}^{N}| x^{i,j} -\bar{x}^{i}(\theta)|^{p}$ by the stochastic Nelder-Mead method \cite{rj} to obtain the estimated parameter value $\hat{\theta}$.

The deterministic Nelder-Mead method is a geometric search method to find a minimizer of an objective function, which was originally devised for nonlinear and deterministic optimization. However, when noise dominates the true differences in the function values, the relative rank of function values in the simplex can be wrong due to the sampling error, leading the algorithm to the wrong direction. The stochastic Nelder-Mead method overcomes this shortage.
\begin{figure}[!htb]
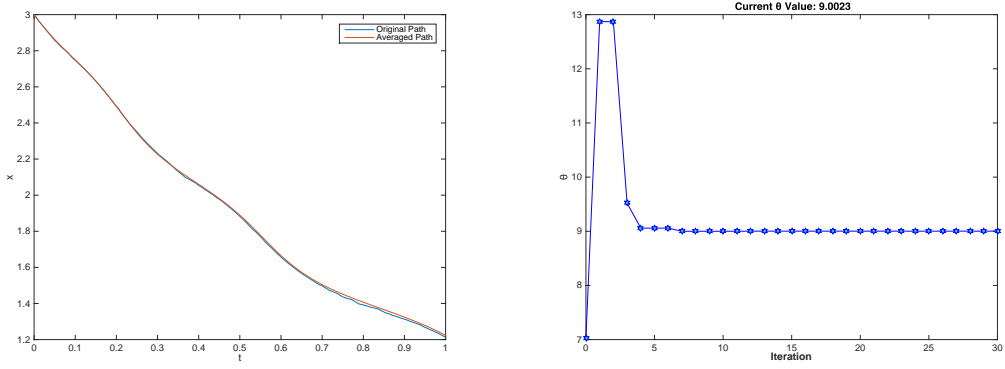

\centering
\subfigure[The sample paths $x$ vs. $\bar x$]{
\label{Fig.sub.1}
\includegraphics[width=0.45\textwidth]{path.eps}}
\subfigure[The estimated value $\hat\theta$]{
\label{Fig.sub.2}
\includegraphics[width=0.45\textwidth]{theta.eps}}
\caption{The sample paths $x, \bar x$ (online color)  and estimated value $\hat\theta$ (shown as $*$): $\theta=9 $ (true parameter value), $\ep=0.01,~\alpha=1.9,~\Theta=(0,100)$, $M=1000$. }\label{Fig 1}
\end{figure}

As shown in Figure 1, we see that slow system $\bar x$ is a good approximation of the slow component $x$ of the original system. By just about  $8$ iteration in the stochastic Nelder-Mead search, we get estimated parameter $\hat \theta \thickapprox 9$ (true parameter value).
\end{example}
\section{Most probable paths}
We further verify that  the average system captures the behaviour of the original system, by examing the most probable paths. We do this in a concrete example.

\begin{example}\hspace{2cm}
\begin{equation}
\left\{
\begin{array}{rl}
& d{X_t} = 0.1({X_t} - X_t^3)Y_t^2dt + {\left( {0.01} \right)^{\frac{1}{\alpha }}}dL_t^{{\alpha }},\\
& d{Y_t} =-\frac{Y_t}{\varepsilon }dt + \frac{2}{{\sqrt \varepsilon  }}d{B_t},\\
& d{Z_{1t}} ={X_t}dt+\sqrt{0.2}dW^{1}_t,\\
& d{Z_{2t}} ={Y_t}dt+\sqrt{0.2}dW^{2}_t,\\
\end{array}
\right.
\end{equation}


 The invariant measure $\rho ^\infty (y;x)$ for the fast variable $Y_t$ satisfies the normal distribution with mean 0 and variance 2 and does not depend on x. So we can get the averaged, one dimensional, slow equation with the original observation
\begin{equation}
\begin{array}{rl}
\bar{f} (x)&= \int_{\mathbb{R}} {0.1(x - {x^3}){y^2}{\rho ^\infty }(y;x)dy}\\
 &= 0.1(x - {x^3})\int_{\mathbb{R}} {{y^2}{\rho ^\infty }(y)dy}\\
 &= 0.2(x - {x^3}).
\end{array}
\end{equation}
and
\begin{equation}
\left\{
\begin{array}{rl}
& d{\bar{X}_t} = 0.2({\bar{X} _t} - \bar{X} _t^3)dt + {\left( {0.01} \right)^{\frac{1}{\alpha }}}dL_t^{{\alpha}}\\
& dZ_{1t}= X_tdt + \sqrt {0.2} dW_t
\end{array}
\right.
\end{equation}
 We can use partial differential equations methods to simulate the solution of the Zakai equation (see \cite{Crisan}), which can be carried out as the following two steps.
 \begin{figure}[!htb]
  \centering
  \includegraphics[width=2.5in,height=2.5in]{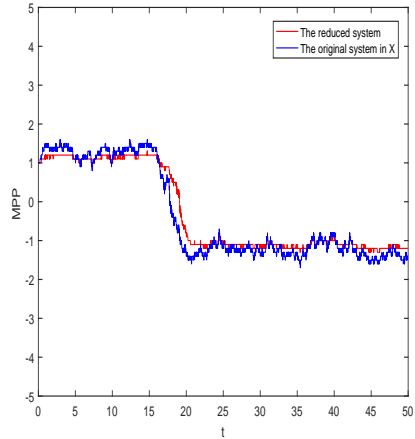}\\
  \caption{The most probable paths $X, \bar X$ (online color) with $X_0=1,\bar X_0=1,\ep=0.01,~\alpha=1.5.$ }\label{Fig 1}
\end{figure}

The first step, called the prediction step, consists of solving the Fokker-Planck equation(see \cite{Gao}).
The second step, called the correction step, uses the new observation to update.
Then we can infer the most probable phase portrait (see \cite{Cheng}) at each time t. Likely, we can  obtain the most probable path of the averaged system with the same observations. The detail illustration can be shown in the Figure 2.

\end{example}

\section{Conclusion and discussion}
We developed a reduction method based on a slow-fast stochastic dynamical system by using the stochastic averaging principle. We demonstrated that the low dimensional slow system approximates the slow dynamics of the original system, by examining the Zakai equation, parameter estimation and most probable paths. Rather than solving the original system, we can accurately estimate the unknown parameter $\theta $ only by the observation of the slow variable, which reduces the computational complexity  and will   not be computationally   expensive. The results established in this paper can be used to examine financial markets or stochastic chemical kinetics, where we are more interested in the change rate $\theta $ for low-risk bounds or mRNA.  Finally, we illustrated the low dimensional approximation by comparing the most probable paths for the slow system and the original system. This provided a deterministic geometric tool to visualize stochastic dynamics.

\medskip
\textbf{Acknowledgements}.  We would like to thank Peter Imkeller (Berlin, Germany) and Yanzhao Cao (Auburn, USA) for helpful discussions.

\section*{References}


\end{document}